\newtheorem{theorem}{Theorem}[section]
\newtheorem{proposition}[theorem]{Proposition}
\newtheorem{lemma}[theorem]{Lemma}
\theoremstyle{definition}
\theoremstyle{remark}
\newtheorem{remark}[theorem]{Remark}
\newcommand{\ep}{\varepsilon}
\newcommand{\om}{\omega}
\newcommand{\De}{\Delta}
\def\CC{\mathbb{C}}
\def\RR{\mathbb{R}}
\def\ZZ{\mathbb{Z}}
\def\TT{\mathbb{T}}
\newcommand{\cL}{{\mathcal L}}
\newcommand{\cN}{{\mathcal N}}
\newcommand{\cV}{{\mathcal V}}
\newcommand{\pd}{\partial}
\newcommand\minus\backslash
\newcommand\lan\langle
\newcommand\ran\rangle
\DeclareRobustCommand{\Bint}
{\mathop{%
		\text{%
			\settowidth{\intwidth}{$\int$}%
			\makebox[0pt][l]{\makebox[\intwidth]{$-$}}%
			$\int$}}}
\newcommand{\supp}{\operatorname{supp}}
\DeclareMathOperator\Div{div}
\newcommand\intT{\int_{\TT^2}}
\renewcommand\leq\leqslant
\renewcommand\geq\geqslant
\newlength{\intwidth}
\numberwithin{equation}{section}
\newcommand\esp{C^\infty_{\mathrm{div}}(\TT^d,\RR^d)}
\newcommand\espR{C^\infty_{\mathrm{div}}(\RR^d,\RR^d)}
\newcommand\espdos{C^\infty_{\mathrm{div}}(\TT^2,\RR^2)}
\newcommand\bv{\bar v}
\newcommand\bp{\bar p}
\begin{document}

\title[Quasi-periodic solutions to the Euler equations]{Quasi-periodic solutions to the incompressible Euler equations in dimensions two and higher}

 \author{Alberto Enciso}
 \address{Instituto de Ciencias Matem\'aticas, Consejo Superior de
   Investigaciones Cient\'\i ficas, 28049 Madrid, Spain}
 \email{aenciso@icmat.es}

 \author{Daniel Peralta-Salas}
 \address{Instituto de Ciencias Matem\'aticas, Consejo Superior de
   Investigaciones Cient\'\i ficas, 28049 Madrid, Spain}
 \email{dperalta@icmat.es}

 \author{Francisco Torres de Lizaur}
 \address{Departamento de An\'alisis Matem\'atico \& IMUS, Universidad de Sevilla, 41012 Sevilla, Spain}
 \email{ftorres2@us.es}

%
%
\begin{abstract}
Building on the work of Crouseilles and Faou on the 2D case, we construct $C^\infty$ quasi-periodic solutions to the incompressible Euler
equations with periodic boundary conditions in dimension~$3$ and in
any even dimension. These solutions are genuinely high-dimensional,
which is particularly interesting because there are extremely few
examples of high-dimensional initial data for which global solutions
are known to exist. These quasi-periodic solutions can be engineered
so that they are dense on tori of arbitrary dimension embedded in the space of solenoidal vector fields. Furthermore,
in the two-dimensional case we show that quasi-periodic solutions are
dense in the phase space of the Euler equations. More precisely, for any integer $N\geq 1$ we prove that any $L^q$~initial stream function can be
approximated in~$L^q$ (strongly when $1\leq q< \infty$ and weak-$*$ when $q=\infty$) by smooth initial data whose solutions are dense on $N$-dimensional tori.
\end{abstract}

\maketitle

\section{Introduction}

As a rule of thumb, a Hamiltonian PDE defined on a compact manifold typically has many periodic and quasi-periodic (in time) solutions. Heuristically, this is because the equations are time-reversible, so dissipative phenomena cannot occur, and because a certain ``recurrent'' behavior can be expected from the fact that the spatial variable ranges over a bounded set. In contrast, when the spatial variable takes values in a noncompact manifold, such as~$\RR^d$, dispersive phenomena typically enter the picture, and the existence of quasi-periodic solutions for nonlinear PDEs becomes rare.

In the case of periodic solutions to nonlinear PDEs, the first results were obtained in the late 1970s using variational methods~\cite{Rabi,Brezis}, but these techniques do not carry over to the case of quasi-periodic solutions due to the appearance of small divisors. Quasi-periodic solutions to nonlinear wave and Schr\"odinger equations were first obtained in the late eighties, first in the one-dimensional case and later in higher dimensions. Building on the pioneering works of Kuksin~\cite{Ku}, Wayne~\cite{Wayne}, Craig~\cite{CW}, Bourgain~\cite{B1,B2}, P\"oschel~\cite{Pos,KP} and others, the field has grown into a vast area of research~\cite{CY,BamGe,GY,EK,BP,BBP,BB,HHM}. Typical techniques used nowadays are normal forms, Newton--Nash--Moser quadratic iterations, paradifferential calculus and Lindstedt series. For a nice introduction to this subject and an up to date bibliography, see~\cite{B} and references therein.

In the context of incompressible fluid mechanics, several important results about the existence of quasi-periodic solutions have been obtained~\cite{IP,IPT,Bplus,BM,BMon,BHM}, particularly in the context of the water wave equations (see also~\cite{EJ19} for the construction of quasi-periodic solutions of the Euler equations in $\RR^2$ using measure-valued vorticities). Given that proofs of the existence of (KAM-type) quasi-periodic solutions are often long and involved, specially in spatial dimensions higher than one, a beautiful and surprisingly easy result is Crouseilles and Faou's proof~\cite{CF} of the existence of quasi-periodic solutions (not of KAM-type) for the incompressible Euler equations,
\begin{equation}\label{E.Euler}
	\pd_t u + u\cdot\nabla u +\nabla p =0\,,\qquad \Div u=0\,,
\end{equation}
on the 2-dimensional torus~$\TT^2$, where $\TT^d:=(\RR/2\pi\ZZ)^d$. The proof is carried out in the vorticity-stream formulation of the 2D Euler equations,
\begin{equation}\label{E.stream}
	\pd_t\om=\nabla^\perp\psi \cdot\nabla\om\,,\qquad \Delta \psi = \om\,.
\end{equation}
A key fact of the construction is that, in two dimensions,
$$u=\nabla^\perp\psi:=(\partial_{x_2}\psi,-\partial_{x_1}\psi)$$
is a stationary solution of compact support whenever~$\psi$ is a radial, compactly supported function. Crouseilles and Faou's quasi-periodic solutions are constructed as a linear combination of localized traveling profiles, which are ``glued'' together using a carefully chosen global steady state.

Our first objective in this note is to extend the result of Crouseilles and Faou about quasi-periodic solutions to the Euler equations in dimensions higher than~2. That this should be feasible at least in dimension~3 is suggested by the recently established existence of smooth 3D steady Euler flows with compact support~\cite{Gavrilov,Constantin}. To fix the notation, let us start by recalling that a smooth solution to the Euler equations on~$\TT^d$ is called {\em quasi-periodic}\/ if there exist some $N\geq1$, an irrational constant vector $\nu\in\RR^N$ (i.e., $\nu\cdot k\neq 0$ for all $k\in\mathbb Z^N\backslash\{0\}$) and a smooth embedding $U: \TT^N\to \esp$, which may depend on $\nu$, in terms of which the velocity field can be written as
\[
u(t,\cdot)= U(\theta_0+\nu t)
\]
for some point $\theta_0\in\TT^N$. Here and in what follows, $\esp$ denotes the space of smooth divergence-free vector fields on the $d$-dimensional torus.

Of course, we want to construct solutions with a nontrivial dependence on all~$d$ variables; in fact, a feature that makes this problem particularly interesting in dimensions~$3$ and higher is that there are extremely few examples of genuinely high-dimensional initial data for which global solutions are known to exist (see however the recent result~\cite{Pausader}). We will therefore look for solutions where $U$ is {\em non-symmetric}\/ in the sense that for all $\theta\in\TT^N$ the divergence-free vector field $U(\theta)$ is not invariant under any 1-parameter group of translations on $\TT^d$. This condition ensures that the solutions really depend on all~$d$ coordinates, so they cannot be obtained from solutions to the Euler equations in a lower dimension. It is also convenient to use the notation $N=0$ for the degenerate case in which the torus is replaced by a set consisting of a single element, which corresponds to the case where $u(t,\cdot )=U(\theta_0)$ is a stationary solution.

We are now ready to state our first main result, which ensures that there are quasi-periodic solutions of the $d$-dimensional Euler equations which faithfully represent any linear flow on an $N$-dimensional torus. In dimension~2, this is the result proven by Crouseilles and Faou.

\begin{theorem}\label{T.torus}
	Assume that the dimension~$d$ is either $3$ or even. For every $N\geq1$ and every~$\nu\in\RR^N\backslash\{0\}$, there exists a non-symmetric embedding $U:\TT^N\to \esp$ and a family of initial data $u_\theta\in\esp$, $\theta\in\TT^N$, such that the corresponding solutions to the incompressible Euler equations are $u_\theta(t,\cdot)=U(\theta+\nu t)$.
\end{theorem}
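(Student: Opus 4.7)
The plan is to follow the 2D strategy of Crouseilles--Faou, extended to $d=3$ using the compactly supported stationary Euler solutions of Gavrilov and of Constantin--La--Vicol, and to even $d$ via a product construction of $m=d/2$ independent 2D solutions. The central mechanism is a \emph{rigid transport principle}: if $v$ is a smooth compactly supported stationary Euler solution on $\RR^d$ and $U_0$ is a stationary Euler solution on $\TT^d$ that is identically equal to a constant vector $c$ on an open set $\Omega$, then, for any trajectory $X(t)=X(0)+tc$ with $X(t)+\supp v\subset\Omega$ for all $t$, the ansatz $u(t,x)=U_0(x)+v(x-X(t))$ solves the Euler equation. Indeed, on the moving support $v\cdot\nabla v=-\nabla p_v$ (stationarity of $v$), $U_0\cdot\nabla v=c\cdot\nabla v=\dot X\cdot\nabla v$ (absorbing $\partial_t$), and $v\cdot\nabla U_0=0$ (since $U_0$ is constant there); off the support, only $U_0\cdot\nabla U_0=0$ remains. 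Crucially, several patches with pairwise disjoint moving supports can be superposed without cross interaction.

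For $d=3$, take the stationary shear $U_0(x)=(V(x_3),W(x_3),0)$ on $\TT^3$, and engineer smooth $2\pi$-periodic $V,W$ so that $(V,W)\equiv(a_i,b_i)$ identically on $N$ disjoint slabs $\{|x_3-y_i|<\epsilon_i\}$. Place in each slab a translated Gavrilov/Constantin--La--Vicol compactly supported stationary solution $v_i$ with small support, moving along $X_i(t)=X_i(0)+t(a_i,b_i,0)$; the trajectory stays inside the slab since it has no $x_3$-component. To realize the prescribed frequency $\nu\in\RR^N\setminus\{0\}$, fix pairwise distinct coprime directions $(p_i,q_i)\in\ZZ^2$ and set $(a_i,b_i)=\nu_i(p_i,q_i)$, so that $\theta_i\mapsto X_i(0)+\theta_i(p_i,q_i,0)$ parametrizes a closed orbit of $\TT^2$ traversed by patch $i$ with angular velocity $\nu_i$. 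The resulting map
\[
U(\theta)=U_0+\sum_{i=1}^N v_i\bigl(\cdot-X_i(0)-\theta_i(p_i,q_i,0)\bigr)
\]
is a smooth embedding of $\TT^N$ into $\esp$ (injectivity and immersion come from the disjoint slabs: the tangent vectors $\partial U/\partial\theta_i$ have disjoint supports), and the Euler flow on its image is precisely $\theta\mapsto\theta+t\nu$. For even $d=2m$, use the decoupling product ansatz $u(x)=(u^{(1)}(y^{(1)}),\ldots,u^{(m)}(y^{(m)}))$ with $(y^{(1)},\ldots,y^{(m)})\in(\TT^2)^m$: the cross-nonlinearities $u^{(i)}\cdot\nabla u^{(j)}$ vanish for $i\neq j$, so the Euler equation decouples into $m$ independent 2D Euler systems. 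Apply the 2D Crouseilles--Faou construction in the first factor to encode all $N$ frequencies, and install a stationary, non-symmetric 2D shear-plus-patch solution in each remaining factor.

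The main obstacle is the non-symmetry condition. A shear alone is invariant under translation in the $e_1$ direction (and $e_2$ in 3D, and correspondingly in each 2D factor for even $d$); adding a nonzero compactly supported patch breaks every such continuous translation symmetry, because a compactly supported field cannot be invariant under any nontrivial translation (the support would have to extend to infinity). This is why the product construction in even dimensions must decorate \emph{every} 2D factor with a symmetry-breaking patch, not just those carrying frequencies: a naive $u=(u^{\mathrm{2D}},0,\ldots,0)$ embedding of a 2D solution would preserve invariance in all added coordinates. Once this symmetry-breaking is in place, the verification of Theorem~\ref{T.torus} is straightforward.
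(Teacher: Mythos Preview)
Your argument is correct and, for $d=3$, essentially coincides with the paper's: your shear $U_0=(V(x_3),W(x_3),0)$ is exactly the paper's background field $w(x)=(0,F(x'))$ with the splitting $\TT^d=\TT^m\times\TT^{d-m}$ taken at $m=1$ (after relabeling coordinates), your ``rigid transport principle'' is the computation the paper does for $\partial_t u$ and $u\cdot\nabla u$, and your coprime-direction parametrization is a particular instance of the paper's linear embedding $\mathcal N:\TT^N\to\TT^{J(d-m)}$.

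For even $d=2m$ the routes genuinely diverge. The paper does \emph{not} use a product decomposition; instead it observes (Proposition~\ref{P.stat}) that $u(x)=f(|x|)\,(x_2,-x_1,\dots,x_d,-x_{d-1})$ is a smooth compactly supported stationary Euler flow in $\RR^d$ for any even $f\in C^\infty_c(\RR)$, and then runs the \emph{same} shear-plus-drifting-patches construction in all dimensions, with the patches themselves being $d$-dimensional objects. Your product ansatz $u=(u^{(1)}(y^{(1)}),\dots,u^{(m)}(y^{(m)}))$ is a valid alternative: it reduces everything to the already-known 2D Crouseilles--Faou result and avoids building new stationary solutions, at the price of having to decorate every 2D factor with a symmetry-breaking patch (which you correctly identified as necessary). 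The paper's approach is more uniform across dimensions and produces patches that are not themselves products of lower-dimensional pieces; your approach is arguably more elementary in that it invokes only the 2D building blocks. Both satisfy the non-symmetry requirement as stated.
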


\begin{remark}\label{R.Dioph}
We are not assuming that the frequency vector~$\nu$ is irrational.
If such a condition holds, the solution $u_\theta(t,\cdot)$ is dense on the embedded $N$-dimensional torus $U(\TT^N)\subset \esp$ for any $\theta\in\TT^N$.
\end{remark}

Theorem~\ref{T.torus} shows that there is a nontrivial embedding of any linear flow on an $N$-dimensional torus into the incompressible Euler equations on~$\TT^d$. By a theorem of Elgindi, Hu and Sverak~\cite{EHS}, recently extended by Kishimoto and Yoneda~\cite{KY}, at least when $d=2$ or~$3$, this $N$-dimensional torus cannot be contained in a subspace spanned by~$\CC^d$-valued trigonometric polynomials of bounded degree. A recent result of one of the authors~\cite{Paco} ensures that any flow on a compact $N$-dimensional manifold can be embedded, up to an arbitrarily small error, into the incompressible Euler equations on some compact Riemannian manifold of high dimension. This theorem utilizes a previous result of Tao~\cite{Tao} about the embedability of norm-preserving quadratic ODEs into the Euler equations on some compact Riemannian manifold.

The proof of Theorem~\ref{T.torus} follows the general strategy of Crouseilles and Faou. One cannot use the vorticity-stream formulation in dimensions higher than two, but a similar construction works directly in the velocity formulation. The solutions we construct are also suitable gluings of localized traveling profiles, so they are not of KAM type (that is, no small divisors arise). Smooth stationary Euler flows on~$\RR^d$ with compact support again play a key role in the construction. When $d=3$, solutions supported on axisymmetric tori have been recently constructed by Gavrilov; when $d$ is even, we present an elementary construction of smooth stationary solutions supported on balls.

Concerning the 2D Euler equations, it is worth mentioning that, although one can prove the result without any explicit reference to the stream function~$\psi$, one can use this formulation to prove a somewhat surprising result about the density of quasi-periodic solutions. A well known result of Yudovich~\cite[Chapter~8]{MB} shows that the 2D Euler equations are globally well posed provided that the initial vorticity~$\om_0$ is in~$L^\infty(\TT^2)$. Our approximation theorem, which is the second main result of this note, is stated in terms of the initial stream function $\psi_0:=\De^{-1}\om_0$. We do not know if a similar result holds in higher dimensions.
%

\begin{theorem}\label{T.dim2}
	Consider any function $\psi_0\in L^q(\TT^2)$, with $1\leq q\leq  \infty$. Given any $N\geq0$ and any $\nu\in\RR^N\backslash\{0\}$, there exist sequences of scalar functions $\psi^n_0\in  C^\infty(\TT^2)$, of  constants $c^n>0$ and of non-symmetric embeddings $U^n:\TT^N\to\espdos$ such that:
\begin{enumerate}
\item The solution to the incompressible Euler equations with initial datum $u^n_0=\nabla^\perp \psi^n_0$ is $u^n(t,\cdot)= U^n(c^n \nu t)$.
\item The sequence $\psi^n_0$ converges to~$\psi_0$ in $L^q(\TT^2)$, strongly if $q<\infty$ and weak-$*$ if $q=\infty$.
\end{enumerate}
\end{theorem}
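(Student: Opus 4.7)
The plan is to build on the 2D construction underlying Theorem~\ref{T.torus} (the Crouseilles--Faou construction) and to show that the initial data producing CF-type quasi-periodic solutions are dense in $L^q(\TT^2)$. The key idea is that both the background shear and the $N$ moving bumps in such a solution can be taken arbitrarily small in $L^q$, so that the approximation of $\psi_0$ is essentially performed by a large family of stationary radial vortices with pairwise disjoint compact supports filling up $\TT^2$ to arbitrarily small measure.

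Concretely, I would construct an initial stream function of the form
\[
\psi_0^n(x) = \Psi_n(x_2) + \sum_{j} \chi_{j,n}(|x - a_{j,n}|),
\]
where $\Psi_n\in C^\infty(\TT)$ depends only on $x_2$ and the radial profiles $\chi_{j,n}$ have pairwise disjoint compact supports inside disks $B(a_{j,n},r_{j,n})\subset\TT^2$. The shear slope $V_n:=-\Psi_n'$ is chosen piecewise constant, with smooth patching over thin transitions, so as to equal $c^n\nu_i$ on a thin horizontal strip $S_i\subset\TT$ (one for each $i=1,\ldots,N$), equal to a balancing constant on one further strip $S_0$ to enforce $\int_\TT V_n\,dx_2=0$ (so that $\Psi_n$ is well defined on $\TT$), and zero elsewhere. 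One ``moving'' bump $\chi_{i,n}$ is placed inside each $S_i$, with disk support contained in the horizontal strip $\{x\in\TT^2:x_2\in S_i\}$; every other (``stationary'') bump is placed in a region where $V_n=0$. The direct computation underlying the 2D case of Theorem~\ref{T.torus}---using that the cross terms $(V_n,0)\cdot\nabla v_i$ and $v_i\cdot\nabla(V_n,0)$ vanish on $\supp v_i$ because $V_n$ is constant there, and that $v_i\cdot\nabla v_k$ vanishes for $i\neq k$ because of disjoint supports---then shows that
\[
U^n(\theta) := \nabla^\perp\!\Bigl[\Psi_n(x_2) + \sum_{i=1}^N \chi_{i,n}(|x-a_{i,n}-\theta_i e_1|) + \sum_{j>N} \chi_{j,n}(|x-a_{j,n}|)\Bigr]
\]
is a smooth embedding $\TT^N\to\espdos$ for which $u^n(t,\cdot)=U^n(c^n\nu t)$ solves the Euler equations with initial datum $\nabla^\perp\psi_0^n$.

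The main quantitative step is to choose the parameters so that $\psi_0^n\to\psi_0$ in $L^q$. I would first mollify $\psi_0$ to a smooth $\tilde\psi_0$ converging in the required sense. Choosing $c^n\to 0$ forces $\|\Psi_n\|_{L^\infty}\le C c^n|\nu|\to 0$, so it suffices to approximate $\tilde\psi_0$ in $L^q$ by $\sum_j\chi_{j,n}(|x-a_{j,n}|)$. For this I would cover $\TT^2$, minus a bad set of small measure containing $S_0$ and the transition regions, by finitely many pairwise disjoint disks $B(a_{j,n},r_{j,n})$ obtained from a multiscale Apollonian-type packing (each new scale filling the gaps left by the previous), chosen so that the total uncovered measure tends to zero. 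In each disk I would take $\chi_{j,n}(r)=\tilde\psi_0(a_{j,n})\,\phi_{r_{j,n},\eta_n}(r)$, where $\phi_{r,\eta}$ is a smooth radial cutoff equal to $1$ on $[0,r-\eta]$ and vanishing at~$r$, with $\eta_n\ll r_{j,n}$. The $L^q$-error then splits into an interior piece of order $r_{j,n}^{1+2/q}$ per disk (from the first-order Taylor remainder of $\tilde\psi_0$, summing to $O(\max_j r_{j,n})$), an annular piece of order $\|\tilde\psi_0\|_{L^\infty}(\eta_n/r_{j,n})^{1/q}$ from the transition annuli, and a gap piece of order $\|\tilde\psi_0\|_{L^\infty}|\mathrm{gap}|^{1/q}$; all three vanish as $n\to\infty$ under a suitable joint scaling of the parameters.

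The main obstacle is the unavoidable transition annulus: a smooth compactly supported radial stream function must decay to zero at the boundary of its support, so in a region where $\tilde\psi_0$ is not close to zero the bump necessarily differs from $\tilde\psi_0$ by an amount of order $\|\tilde\psi_0\|_{L^\infty}$ there. This is handled by taking $\eta_n$ much smaller than $r_{j,n}$, which causes the higher Sobolev norms of $\psi_0^n$ to blow up with $n$, but this is allowed since the statement only requires $\psi_0^n\in C^\infty(\TT^2)$. Non-symmetry of $U^n(\theta)$ for every $\theta\in\TT^N$ is automatic once the centers of the stationary bumps are chosen generically, since any finite configuration of disjoint nontrivial radial bumps breaks every continuous one-parameter translation of $\TT^2$. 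Finally, the $q=\infty$ case follows from the $q<\infty$ construction: the mollification $\tilde\psi_0$ converges weakly-$*$ to $\psi_0$ while being uniformly bounded in $L^\infty$, and the sequence $\psi_0^n$ above can be arranged to be uniformly bounded in $L^\infty$ and to converge strongly to $\tilde\psi_0$ in every $L^p$ with $p<\infty$, hence weakly-$*$ in $L^\infty$.
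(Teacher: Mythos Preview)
Your approach is correct and follows the same overall strategy as the paper: approximate $\psi_0$ in $L^q$ by a smooth locally radial function (a finite sum of compactly supported radial bumps on pairwise disjoint disks), then superimpose a shear stream function of size $O(c^n)$ so that the resulting initial data evolve as a linear flow on $\TT^N$, and finally let $c^n\to0$. The implementation differs in one structural respect. You place exactly $N$ designated moving bumps, each confined to a thin horizontal strip where the shear speed equals $c^n\nu_i$, and keep all remaining bumps stationary in the large region where the shear vanishes. The paper instead partitions $\TT^2$ into $N$ wide vertical strips $\{2\pi j/N<x_1<2\pi(j+1)/N\}$ and transports \emph{every} bump in strip $j$ at speed $c^n\nu^j$ in the $x_2$-direction, using a shear stream function $c^n F_n(x_1)$ with $F_n'=-\nu^j$ on the interior of strip~$j$; thus there is no distinction between ``moving'' and ``stationary'' bumps, and no need to reserve thin strips. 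Both variants work. The paper's version has the mild advantage that the approximation step (Lemma~\ref{L.locrad}: density of locally radial functions supported away from $N$ vertical lines) is cleanly decoupled from the dynamics, and a single-scale packing by disks of radius $\le 1/n$ missing only measure $1/n$, each carrying the constant value $\Bint_{B_{nl}}\psi_0$, already gives the needed $L^q$ bound; your point values $\tilde\psi_0(a_{j,n})$ and multiscale packing are an equally valid route but slightly heavier than necessary. The $q=\infty$ argument and the non-symmetry remark are handled the same way in both.
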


Note, in particular, that the functions $u^n(t,\cdot)$ can be dense on an $N$-dimensional torus embedded in~$\espdos$ provided that $\nu$ is irrational (or the functions can also be stationary solutions, in the particular case $N=0$).

\section{Construction of quasi-periodic solutions: proof of Theorem~\ref{T.torus}}
\label{S.QP}

To construct quasi-periodic solutions on~$\TT^d$, we follow the strategy of Crouseilles and Faou, with the caveat that all computations must be carried out directly using the velocity field. The starting observation is that there are nontrivial $C^\infty$ stationary Euler flows of compact support in dimension~$d$:

\begin{proposition}\label{P.stat}
	If $d=3$ or $d$ is even, there are smooth compactly supported solutions to the stationary Euler equations on~$\RR^d$.
\end{proposition}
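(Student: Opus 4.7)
The plan is to treat $d=3$ and the even-dimensional case by separate arguments, since only the latter admits an elementary direct construction.

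For $d=3$ I would simply invoke (without reproving) Gavrilov's theorem giving a smooth axisymmetric stationary Euler flow on $\RR^3$ supported in a thin tubular neighborhood of a round circle, together with the reformulation by Constantin already cited in the introduction. This is the substantive content of the proposition and there is no reasonable way to replace it in a short note.

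For $d=2m$ even, I would give an explicit construction that generalizes the 2D radial stream function via the standard complex structure on $\RR^{2m} \cong \CC^m$. Let $J$ denote the skew-symmetric $2m \times 2m$ matrix consisting of $m$ diagonal $2 \times 2$ blocks $\bigl(\begin{smallmatrix} 0 & 1 \\ -1 & 0 \end{smallmatrix}\bigr)$, so that $J^\top = -J$ and $J^2 = -I$. For any smooth compactly supported radial function $\psi(x) = \Psi(|x|^2)$ on $\RR^{2m}$, set
\[
u(x) := J\,\nabla \psi(x) = 2\Psi'(|x|^2)\, J x.
\]
Then $\Div u = \operatorname{tr}(J\,\operatorname{Hess}\psi) = 0$ because a skew matrix times a symmetric one has vanishing trace, and $u$ is tangent to every sphere centered at the origin because $x \cdot Jx = 0$. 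A short calculation in which the term proportional to $\Psi''$ drops out by $x_k J_{kl} x_l = 0$ and the remaining term simplifies via $J_{ik}J_{kl} = -\delta_{il}$ yields
\[
u \cdot \nabla u \;=\; -4\,\Psi'(|x|^2)^2\, x \;=\; -\nabla p,
\]
where $p(x) = -2 \int_{|x|^2}^{\infty} \Psi'(s)^2\, ds$ is smooth and supported in $\supp \psi$. Hence $(u,p)$ is a smooth compactly supported stationary Euler flow on $\RR^{2m}$.

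The main obstacle is really just the three-dimensional case, which rests on Gavrilov's nontrivial construction. The even-dimensional argument is essentially the 2D formula $u = \nabla^\perp \psi$ with $\psi$ radial, transcribed to $\CC^m$ through the K\"ahler structure; the only point to verify is that the convective derivative of such a vector field remains radial, which is exactly what the identities $x \cdot Jx = 0$ and $J^2 = -I$ guarantee.
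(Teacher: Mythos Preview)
Your proof is correct and is essentially the same as the paper's: for $d=3$ both cite Gavrilov, and for $d=2m$ the paper also takes the field $u(x)=f(|x|)\,\widetilde u(x)$ with $\widetilde u(x)=(x_2,-x_1,\dots,x_d,-x_{d-1})=Jx$, which is exactly your $2\Psi'(|x|^2)\,Jx$ with $f(r)=2\Psi'(r^2)$. The only cosmetic difference is that the paper first notes $\widetilde u\cdot\nabla\widetilde u=-x$ and $\widetilde u\cdot\nabla|x|^2=0$ before multiplying by the radial cutoff, whereas you compute $u\cdot\nabla u$ directly via $x\cdot Jx=0$ and $J^2=-I$.
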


\begin{proof}
If $d=3$, this is a celebrated result of Gavrilov~\cite{Gavrilov,Constantin}. Let us now assume that $d$ is even. A short computation shows that the vector field on~$\RR^d$
	\[
	\widetilde u (x):= (x_2,-x_1,\dots, x_d,-x_{d-1})
	\]
	is divergence-free and satisfies the equation
	\[
	\widetilde u\cdot\nabla \widetilde u +\nabla \bp=0
	\]
	with $\widetilde p(x):=\frac12|x|^2$. Furthermore, $\widetilde u\cdot\nabla \widetilde p=0$. Therefore,
	\begin{equation}\label{E.tildeu}
			u(x):= f(|x|) \,\widetilde u(x)
	\end{equation}
	is a smooth stationary Euler flow with compact support
	for any even function $f\in C^\infty_c(\RR)$, and the corresponding pressure function is
	\[
	p(x):=  \int_0^{|x|} r\,f( r)^2\, d r\,.
	\]
	The proposition then follows.
	\end{proof}
	
	\begin{remark}\label{R.dim2}
	In dimension~2, \eqref{E.tildeu} is just the stationary solution associated with the radial stream function $\psi(x):=\int_0^{|x|} r\,f(r)\, dr$.	
	\end{remark}

Let $v\in\espR$ be a compactly supported solution to the stationary Euler equations, and let $p_v$ be its associated pressure function.
Up to a rescaling and translation of Euclidean coordinates, and adding a constant to~$p_v$ if necessary, we can assume that the supports of $v$ and~$p_v$ are contained in a ball centered at the origin of small radius~$\ep$. Let us now fix an integer $1\leq m\leq d-1$. Writing points on the $d$-dimensional torus as
\[
\TT^d\ni x=(x',x'')\in \TT^m\times\TT^{d-m}\,,
\]
and picking points $y^j\in\TT^m$ and vectors $\bar\nu^j\in\RR^{d-m}$ with $1\leq j\leq J$, one can define a time-dependent vector field on~$\TT^d$ as
\begin{equation}\label{E.u}
	u(t,x):= \sum_{j=1}^J \bv_j(t,x)+w(x)\,.
\end{equation}
Here
\[
\bv_j(t,x):=\bv(x'-y^j,x''-\bar\nu^jt)
\]
and
\[
\bv(x):= \sum_{k\in\ZZ^d} v(x+2\pi k)\in\esp
\]
is the periodic extension of $v$, which satisfies the stationary Euler equations on $\TT^d$ with pressure
\begin{equation}\label{E.pv}
p_{\bv}(x):= \sum_{k\in\ZZ^d} p_v(x+2\pi k)\in C^\infty(\TT^d)\,.
\end{equation}
As $\ep$ is small, only one term of the sum is nonzero for each~$x\in\RR^d$. Also, note that, as $\supp \bv_j(t,\cdot)\subset B'(y^j;\ep)\times\TT^{d-m}$, where $B'(y;r)$ denotes the ball in~$\TT^m$ of center~$y$ and radius~$r$,  the supports of the fields $\bv_j(t,\cdot)$ and $\bv_k(t',\cdot)$, $j\neq k$, are disjoint for all $t,t'$.

With an analogous splitting $\RR^d=\RR^m\times\RR^{d-m}$, it is  apparent that the vector field
\[
w(x):=(0,F(x'))\in \esp
\]
also satisfies the stationary Euler equations for any $F\in C^\infty(\TT^m,\RR^{d-m})$:
\begin{equation}\label{E.w}
w\cdot\nabla w=0\,,\qquad \Div w=0\,.	
\end{equation}

We now want to ensure that the vector field~$u$ defined by~\eqref{E.u} is a time-dependent solution of the Euler equations. For this, we will make some {assumptions} about the location of the points $y^j$ and about the structure of the function~$F$. To this end, we introduce the notation $\rho(z,y)$ for the distance between two points $y,z\in\TT^m$. Of course, if~$y$ is fixed and the distance between~$y$ and~$z$ is smaller than~$\pi$, the function~$\rho(\cdot,y)$ is smooth but at $z=y$ and can be written as
\begin{equation}\label{E.rho}
	\rho(z,y)=|y-z|\,,
\end{equation}
where (with some abuse of notation) we also denote by $y,z$ the unique representatives of the points in a certain $m$-dimensional cube of side $2\pi$.

We are now ready to make our assumptions precise:
\begin{enumerate}
	\item For all $ j\neq k$, $\rho(y^j,y^k)>4\ep$. 	
	\item The function $F$ is
	\[
	F(x'):=\sum_{j=1}^J \bar\nu^j \,\chi(\rho(x',y^j))\,,
	\]
	where
	$\chi(r)\in C^\infty(\RR)$ is some function which equals~1 if $|r|<\ep$ and~0 if $|r|>2\ep$.
	In particular, the vector field~$w$ defined above is locally constant, and equal to $(0,\bar\nu^j)$, on the support of $\bv_j$, for all $1\leq j\leq J$.
\end{enumerate}
These assumptions are clearly consistent provided that $\ep\ll 1/J$.
	
Using these assumptions and Equations~\eqref{E.u}-\eqref{E.w}, one can then compute:
\begin{align*}
	\pd_t u&= -\sum_{j=1}^J \bar\nu^j\cdot\nabla''\bv_j\,,\\
	u\cdot \nabla u&= \sum_{j,k=1}^J \bv_j\cdot \nabla\bv_k+ \sum_{j=1}^J (\bv_j\cdot \nabla w+ w\cdot\nabla \bv_j) + w\cdot\nabla w\\
	&=-\nabla\sum_{j=1}^J p_{\bv}(x'-y^j,x''-\bar\nu^jt)+ \sum_{j=1}^J \bar\nu^j\cdot\nabla''\bv_j\,.
\end{align*}
To pass to the last line we have used Equation~\eqref{E.w}, that $\bv_j$ and $\bv_k$ have disjoint supports and satisfy the equation
\[
\bv_j \cdot \nabla \bv_j=-\nabla p_{\bv}(x'-y^j,x''-\bar\nu^jt)\,,
\]
and that $w=(0,\bar\nu^j)$ on the support of $\bv_j$.

As $u$ is obviously divergence-free, it then follows from the previous computation that $u$ satisfies the Euler equations with pressure
\[
p(t,x):=\sum_{j=1}^J p_{\bv}(x'-y^j,x''-\nu^jt)\,.
\]

To construct the embedding of $\TT^N$, assume without any loss of generality that $N\leq J(d-m)$ and take a linear embedding $\cN: \TT^N\to\TT^{J(d-m)}$ which we write as
\[
\cN(\theta)=(\cN^1(\theta),\dots, \cN^J(\theta))
\]
with $\cN^j:\TT^N\to\TT^{d-m}$ (being linear means that each factor $\cN^j$ lifts to a linear map at the universal cover). Let us denote the differential of $\cN$ at $\theta=0$ as $\bar\cN: \RR^N\to\RR^{J(d-m)}$. Then we can take the desired family of (quasi-periodic) solutions to be $U(\theta+\nu t)$, $\theta\in\TT^N$, with $U:\TT^N\to\esp$ given by
\[
U(\theta+\nu t)(x):=\sum_{j=1}^J \bv(x'-y^j,x''-\cN^j(\theta)-\bar\cN^j(\nu)t) + \sum_{j=1}^J (0,\bar\cN^j(\nu)\,\chi(\rho(x',y^j))) \,,
\]
which correspond to initial data
\[
u_\theta(x):=U(\theta)=\sum_{j=1}^J \bv(x'-y^j,x''-\cN^j(\theta)) + \sum_{j=1}^J (0,\bar\cN^j(\nu)\,\chi(\rho(x',y^j)))\,.
\]


\begin{remark}
In the particular case that $m=d-1$ and $J=N$, we can take the embedding $\cN:\TT^N\to\TT^N$ to be the identity, and the family of (quasi-periodic) solutions is given by
\[
U(\theta+\nu t)(x):=\sum_{j=1}^N \bv(x'-y^j,x''-\theta^j-\nu^jt) + \sum_{j=1}^N (0,\nu^j\,\chi(\rho(x',y^j))) \,,
\]
where $\theta=(\theta^1\cdots,\theta^N)\in\TT^N$.
\end{remark}

\begin{remark}
To prove a result analogous to Theorem~\ref{T.torus} in odd dimensions higher than~$3$, one just needs to establish the existence of $C^\infty$ compactly supported steady states. Probably a suitable generalization of  Gavrilov's solutions can be constructed on any odd dimension. If this is true, then the theorem would hold in all dimensions.
\end{remark}


\section{Density of quasi-periodic solutions: proof of Theorem~\ref{T.dim2}}
\label{S.dim2}

The proof hinges on the following elementary lemma. To state the result concisely, let us say that a function $\psi$ on~$\TT^2$ is {\em locally radial}\/ if its support is a union of (pairwise disjoint) finitely many closed disks
$\overline{B(y^l,r^l)}$,
and that on each of these disks the function is radial:
\[
\psi(x)= f^l(|x-y^l|)
\]
for all $x\in B(y^l,r^l)$ and some function $f^l$. All along this section we describe points on $\TT^2$ using Euclidean coordinates $(x_1,x_2)\in (\RR/2\pi\ZZ)^2$.


\begin{lemma}\label{L.locrad}
	Let
	\[
	\cV:=\big\{x\in\TT^2: x_1\in\{s^1,\cdots, s^N\} \big \}
	\]
	be a finite collection of N vertical lines. If $q<\infty$, there exists a sequence of locally radial functions $\{\phi^n\}\subset C^\infty_c(\TT^2\backslash\cV)$ such that
	\begin{equation}\label{E.lemma}
			\|\phi^{n}-\psi_{0}\|_{L^{q}(\TT^{2})} \rightarrow 0
	\end{equation}
	as $n\to\infty$. If $q=\infty$, then $\phi^{n}\to \psi_{0}$ weak-$*$ in $L^\infty$ instead, that is,
	\begin{equation}\label{E.lemma2}
		\intT \phi^n g\, dx \to \intT\psi_0 g\, dx
\end{equation}
as $n\to\infty$, for all $g\in L^{1}(\TT^2)$.
\end{lemma}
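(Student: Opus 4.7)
The plan is to first reduce $\psi_0$ to a smooth function compactly supported away from $\cV$, and then to approximate such a function by locally radial ones via a disk-packing argument.

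For the reduction, since $\cV$ has zero Lebesgue measure in $\TT^2$, a standard mollification followed by multiplication by a smooth cutoff vanishing in a neighborhood of $\cV$ produces a sequence in $C^\infty_c(\TT^2\setminus\cV)$ converging to $\psi_0$ in $L^q$ when $q<\infty$. For $q=\infty$, the same construction yields $\phi\in C^\infty_c(\TT^2\setminus\cV)$ with $\|\phi\|_{L^\infty}\leq\|\psi_0\|_{L^\infty}$ and $\phi\to\psi_0$ in $L^1(\TT^2)$; combined with uniform $L^\infty$ bounds, the latter implies weak-$*$ convergence in $L^\infty$ (split any test $g\in L^1$ as $g=g_M+g^M$ with $|g_M|\leq M$ and $\|g^M\|_{L^1}$ small). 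Hence it suffices to show that any $\phi\in C^\infty_c(\TT^2\setminus\cV)$ can be approximated in $L^q$ by locally radial functions in $C^\infty_c(\TT^2\setminus\cV)$ having $L^\infty$-norm bounded by $\|\phi\|_{L^\infty}$.

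For the construction, let $U\Subset\TT^2\setminus\cV$ be an open neighborhood of $\supp\phi$. By the Vitali covering theorem, for any $\delta,\eta>0$ one can extract a \emph{finite} family of pairwise disjoint closed disks $\overline{B(y^l,r^l)}\subset U$, $l=1,\dots,L$, with $r^l<\delta$ and $|U\setminus\bigcup_l \overline{B(y^l,r^l)}|<\eta$. Fix a smooth, even cutoff $\chi:\RR\to[0,1]$ equal to $1$ on $[-1+\sigma,1-\sigma]$ and supported in $(-1,1)$, and set
\[
\phi^n(x):=\sum_{l=1}^L \phi(y^l)\,\chi\bigl(|x-y^l|/r^l\bigr).
\]
This function lies in $C^\infty_c(\TT^2\setminus\cV)$, is locally radial on the disjoint disks $\overline{B(y^l,r^l)}$, and satisfies $\|\phi^n\|_{L^\infty}\leq\|\phi\|_{L^\infty}$.

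For the estimate, I split $\TT^2$ into three regions: (i) the cores $B(y^l,(1-\sigma)r^l)$, on which $\phi^n=\phi(y^l)$ so that $|\phi^n-\phi|\leq\omega_\phi(\delta)$ by uniform continuity of $\phi$; (ii) the transition annuli $B(y^l,r^l)\setminus B(y^l,(1-\sigma)r^l)$, of total area at most $C\sigma|\TT^2|$ and on which $|\phi^n-\phi|\leq 2\|\phi\|_{L^\infty}$; and (iii) the uncovered set, of measure at most $\eta$, on which $\phi^n=0$ and $|\phi|\leq\|\phi\|_{L^\infty}$. Summing the three contributions to $\|\phi^n-\phi\|_{L^q}^q$ yields a bound that vanishes as $\delta,\sigma,\eta\to 0$ in that order, producing the desired sequence. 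The main obstacle is the unavoidable annular transition region in (ii): a locally radial smooth function must vanish near each boundary circle, so it cannot match $\phi$ pointwise there, and the proof hinges on calibrating the transition width $\sigma$ against the disk radii and the uncovered measure $\eta$ to keep the total error small while still preserving the uniform $L^\infty$-bound needed for the weak-$*$ step.
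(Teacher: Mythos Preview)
Your proof is correct and follows essentially the same approach as the paper: reduce to a smooth target, cover by finitely many small disjoint disks avoiding~$\cV$, and approximate by a sum of smooth radial bumps with constant coefficients, controlling the error on the cores, the transition annuli, and the uncovered remainder. The only cosmetic differences are that the paper takes averages $\Bint_{B_{nl}}\psi_0$ instead of center values $\phi(y^l)$, does not first cut off near~$\cV$ (it simply places the balls away from~$\cV$), and handles the $q=\infty$ case by testing directly against smooth~$g$ rather than via your $L^1$-plus-uniform-bound argument.
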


\begin{proof}
Let us first assume that $1\leq q<\infty$. Since $C^\infty$ functions are dense in $L^q$, $1\leq q<\infty$ (in the $L^q$ norm), we can assume that $\psi_0\in C^\infty(\TT^2)$ without loss of generality.

	
	For each $n>1$, one can pick a finite collection of disjoint closed balls
	\[
	B_{nl}:= \overline{B(y_{nl};r_{nl})}\subset\TT^2\backslash\cV\,,
	\]
	where $y_{nl}$ are points in~$\TT^2$ and where, for each~$n$, $l$ ranges from 1 to some integer~$L_n$, such that:
	\begin{enumerate}
		\item The radii do not depend on $l$ and are bounded as $r_{nl}\leq 1/n$.
		\item The union of the balls covers almost the whole torus in the sense that
		\[
		\left|\TT^2\backslash \bigcup_{l=1}^{L_n} B_{nl}\right| \leq\frac1n\,.
		\]
	\end{enumerate}
	Obviously, the area of each ball $B_{nl}$ does not depend on $l$, so we shall use the notation $A_n:=|B_{nl}|$.

	Consider now the step-like functions
	\[
	\Phi^n(x):= \sum_{l=1}^{L_n}1_{B_{nl}}(x)\Bint_{B_{nl}}\psi_0\,,
	\]
	where $\Bint_{B}F:=|B|^{-1}\int_BF$ denotes the average of a function $F$ over a set $B$. As we can safely assume that $\psi_0\in C^\infty(\TT^2)$, it is easy to see that
	\begin{align*}
\intT |\psi_0-\Phi^n|^{q}&=\sum_{l=1}^{L_n} \int_{B_{nl}} \bigg|\psi_0(x)-\Bint_{B_{nl}}\psi_0\bigg|^{q}\, dx + \int_{\TT^2\backslash \bigcup_{l=1}^{L_n} B_{nl}}|\psi_0(x)|^{q}\,dx\\  &\leq \frac{2^{q} \|\nabla\psi_0\|^{q}_{L^\infty(\TT^2)}L_nA_n}{n^{q}}+\frac{\|\psi_0\|^{q}_{L^{\infty}(\TT^2)}}{n}  \\
&\leq 2^{q} \frac{\|\psi_0\|^{q}_{C^1(\TT^2)}}{n}\,,
	\end{align*}
which converges to 0 as $n\to\infty$. To obtain the first estimate we have used the upper bound for $r_{nl}$, property~(ii) above and the fact that, by the mean value theorem, for any point $x$ on the ball $B_{nl}$ we can write the bound
\[
\bigg|\psi_{0}(x)-\Bint_{B_{nl}}\psi_0\bigg| \leq \frac{2}{n} \|\nabla \psi_{0}\| _{L^{\infty}(B_{nl})} \,.
\]
To pass to the third line we simply used that, by construction, $L_nA_n\lesssim 1$, and $(a^q+b^q) < (a+b)^{q}$ for positive numbers $a, b$.
	
Now, a simple argument allows us to ``smooth out'' the $L^\infty$ functions $\Phi^n$. Indeed, let $\{H_m\}_{m=1}^\infty$ be a sequence of smooth even functions on the real line, equal to~1 on $[-\frac12,\frac12]$, whose support is contained in $[-1,1]$, and converging to the characteristic function of the interval $[-1,1]$ strongly in $L^q(\RR)$. Then
	\begin{equation}\label{E.L1}
			H_m\bigg(\frac{\rho(x,y_{nl})}{r_{nl}}\bigg)\to 1_{B_{nl}}(x)
	\end{equation}
strongly in~$L^q$ as $m\to\infty$, where we are denoting by $\rho(x,y)$ the distance between two points $x,y\in\TT^2$. Therefore, using the previous estimate for the sequence $\Phi^n$, we easily infer that there is a subsequence $\{m_n\}$ such that the smooth locally radial functions
	\begin{equation}\label{E.phin}
			\phi^n(x):= \sum_{l=1}^{L_n}H_{m_n}\bigg(\frac{\rho(x,y_{nl})}{r_{nl}}\bigg)\Bint_{B_{nl}}\psi_0
	\end{equation}
	satisfy~\eqref{E.lemma}, as we wanted to prove. Notice that, by construction, $\phi^n$ is supported on $\bigcup_{l=1}^{L_n} B_{nl}$. This completes the proof of the lemma when $q<\infty$.

Let us now consider the case $q=\infty$ and establish~\eqref{E.lemma2}. The proof goes essentially as above. Since smooth functions are dense in~$L^1(\TT^2)$, we can safely assume that $g\in C^\infty(\TT^2)$. We can then write the estimate
\begin{align*}
&\intT \psi_0 g\, dx - \sum_{l=1}^{L_n}\int_{B_{nl}} \psi_0(x)\,dx  \Bint_{B_{nl}}g = \intT \Big(g(x)-\sum_{l=1}^{L_n} 1_{B_{nl}}(x)\Bint_{B_{nl}}g\Big)\psi_0(x)\,dx\\
 &\leq \sum_{l=1}^{L_n}\int_{B_{nl}}\Big|g(x)-\Bint_{B_{nl}}g\Big ||\psi_0(x)|\,dx+\int_{\TT^2\backslash \bigcup_{l=1}^{L_n} B_{nl}}|g(x)||\psi_0(x)|\,dx\\
 &\leq C\|\psi_0\|_{L^\infty(\TT^2)}\|g\|_{C^1(\TT^2)}\Big(\frac{L_nA_n}{n}+\frac1n\Big)\to 0
\end{align*}
as $n\to\infty$ because $L_nA_n\lesssim 1$. Accordingly, we conclude that
\[
\sum_{l=1}^{L_n}\int_{B_{nl}} \psi_0(x)\,dx  \Bint_{B_{nl}}g \to \intT \psi_0 g\, dx
\]
as $n\to\infty$. Since moreover
	\[
	\Bint_{B_{nl}} H_m\bigg(\frac{\rho(x,y_{nl})}{r_{nl}}\bigg)\, g(x)\,dx\to \Bint_{B_{nl}}g
	\]
as $m\to\infty$ by~\eqref{E.L1}, we conclude that there exists a subsequence $\{m_n\}$ such that the smooth locally radial function~\eqref{E.phin} satisfies~\eqref{E.lemma2}:
\begin{align*}
\left|\intT (\psi_0-\phi^n)g\right|&\leq\left|\intT \psi_0 g\, dx - \sum_{l=1}^{L_n}\int_{B_{nl}} \psi_0(x)\,dx  \Bint_{B_{nl}}g\right|\\
&+\sum_{l=1}^{L_n}\left|\Bint_{B_{nl}} H_{m_n}\bigg(\frac{\rho(x,y_{nl})}{r_{nl}}\bigg)\, g(x)\,dx-\Bint_{B_{nl}}g\right|\cdot\left|\int_{B_{nl}}\psi_0(x)\,dx\right|\\
&\to 0\,,
\end{align*}
and the claim follows.
\end{proof}


We are now ready to present the proof of Theorem~\ref{T.dim2}. Let us start with the case $q<\infty$. We consider the collection of vertical lines
\[
	\cV:=\big\{\{\tfrac{2\pi j}N\}\times\TT: 1\leq j\leq N \big \}\,.
	\]
	Lemma~\ref{L.locrad} and Equation~\eqref{E.phin} show that a sequence of $C^\infty$ locally radial functions of the form
	\[
	\phi^n(x):= \sum_{l=1}^{L_n}a_{nl}\,H_{m_n}\bigg(\frac{\rho(x,y_{nl})}{r_{nl}}\bigg)\,,
	\]
	with
	\[
	a_{nl}:=\Bint_{B_{nl}}\psi_0\,,
	\]
	converges strongly to~$\psi_0$ in~$L^q$ for $q \in [1, \infty)$ as $n\to\infty$, where the closed balls~$\{B_{nl}\}_{l=1}^{L_n}$ have the same properties as in the proof of the lemma. For each $1\leq j\leq N$, we denote by
	\[
	\{B_{nl}: l\in\cL_{nj}\}
	\]
	the subset of closed balls located in the (open) region between the vertical lines $x_1= 2\pi j/N$ and $x_1=2\pi(j+1)/N$ (of course, we identify $2\pi(N+1)/N$ with $2\pi/N$). Recall that, by construction, there exists some large integer $M_n$ such that the distance between $B_{nl}$ and~$\cV$ is at least $2/M_n$ for all $l\leq L_n$.

	Now pick any numbers $\nu^j$, with $1\leq j\leq N$. For each~$1\leq j\leq N$ and each positive integer $M\gg N$, let $\chi_{j,M}\in C^\infty(\RR,[0,1])$ be a $2\pi$-periodic function such that
	\[
	\chi_{j,M}(s)=\begin{cases}
		s & \text{if } s\in [\frac{2\pi j}N+\frac2M,\frac{2\pi(j+1)}N-\frac2M]\,,\\
		0 & \text{if } s\in [0,\frac{2\pi j}N+\frac1M]\cup [\frac{2\pi(j+1)}N-\frac1M,2\pi]\,,
	\end{cases}
	\]
for $1\leq j\leq N-1$, and
\[
	\chi_{N,M}(s)=\begin{cases}
		s & \text{if } s\in [\frac2M,\frac{2\pi}N-\frac2M]\,,\\
		0 & \text{if } s\in [0,\frac1M]\cup [\frac{2\pi}N-\frac1M,2\pi]\,.
	\end{cases}
	\]
Obviously, $\chi_{j,M}$ and $\chi_{j',M}$ have disjoint supports if $j\neq j'$. Now we define
	\[
	F_n(s):= -\sum_{j=1}^N \nu^j\, \chi_{j,M_n}(s)\in C^\infty(\TT)\,.
	\]
	
	Finally we set, for $c\in\RR$, the smooth function
	\[
	\psi^{n,c}(t,x):= \sum_{j=1}^N \sum_{l\in\cL_{n,j}} a_{nl}\,H_{m_n}\bigg(\frac{\rho((x_1,x_2-c\nu^jt),y_{nl})}{r_{nl}}\bigg)+ cF_n(x_1)\,.
	\]	
It is straightforward to check that
\[
\nabla^\perp F_n=(0,\nu^j)
\]	
on the support of $H_{m_n}\bigg(\frac{\rho((x_1,x_2-c\nu^jt),y_{nl})}{r_{nl}}\bigg)$ for all $l\in\cL_{nj}$.

A minor modification of the proof of Theorem~\ref{T.torus} shows that the field
$$v^n(t,x):=\nabla^\perp \psi^{n,c}(t,x)$$
is a solution to the incompressible Euler equations for any positive integer~$n$ and all real $c$ (one can also follow directly the proof in~\cite{CF}).
Since $\phi^n\to\psi_0$ strongly in~$L^q$ as $n\to\infty$ and $\psi^{n,c}(0,\cdot)\to \phi^n$ strongly in~$L^q$ as $c\to0$, the theorem follows by taking
	\[
	\psi_0^n(x):=\psi^{n,c^n}(0,x)\,,
	\]
	where $\{c^n\}$ is a sequence of positive numbers that tends to zero fast enough. Indeed, the corresponding embeddings of $\TT^N$ are
	\[
	U^n(\theta):= \nabla^\perp \Big(\sum_{j=1}^N \sum_{l\in\cL_{nj}} a_{nl}\,H_{m_n}\bigg(\frac{\rho((x_1,x_2-\theta^j),y_{nl})}{r_{nl}}\bigg)+ c^nF_n(x_1)\Big)\,,
	\]
with $\theta=(\theta^1,\cdots,\theta^N)\in\TT^N$, and clearly the velocity field is
\[
u^n(t,\cdot)=U^n(c^n\nu t)=\nabla^\perp \Big(\sum_{j=1}^N \sum_{l\in\cL_{nj}} a_{nl}\,H_{m_n}\bigg(\frac{\rho((x_1,x_2-c^n\nu^jt),y_{nl})}{r_{nl}}\bigg)+ c^nF_n(x_1)\Big)\,,
\]
corresponding to the initial datum $u_0^n:=\nabla^\perp \psi_0^n$.
This completes the proof of the theorem.

When $q=\infty$, the argument remains unchanged, the only difference being that the application of Lemma~\ref{L.locrad} leads to a sequence converging to~$\psi_0$ in the $L^\infty$ weak-$*$ topology rather than in norm.

\section*{Acknowledgements}

The authors would like to thank \'Angel Castro for valuable discussions. This work has received funding from the European Research Council (ERC) under the European Union's Horizon 2020 research and innovation program through the Consolidator Grant agreement~862342 (A.E.) and the Marie Curie Fellowship 101063565 (F.T.L.). It is partially supported by the grants CEX2019-000904-S, RED2018-102650-T and PID2019-106715GB GB-C21 (D.P.-S.) funded by MCIN/ AEI/10.13039/501100011033, and a Fields Ontario Postdoctoral Fellowship (F.T.L.) financed by the NSERC grant RGPIN-2019-05209. D.P.-S. and F.T.L. also acknowledge partial support from the grant ``Computational, dynamical and geometrical complexity in fluid dynamics'', Ayudas Fundaci\'on BBVA a Proyectos de Investigaci\'on Cient\'ifica 2021.

\bibliographystyle{amsplain}

\end{document}